\def\th@exercise{%
  \normalfont % body font
  \thm@headpunct{:}%
}
\theoremstyle{plain}
\newtheorem{thm}{Theorem}
\newtheorem{cor}[thm]{Corollary}
\newtheorem{prop}[thm]{Proposition}
\newtheorem{lemma}[thm]{Lemma}
\pgfplotsset{compat=1.15}
\theoremstyle{remark}
\theoremstyle{plain}
\newtheoremstyle{note}% name
  {3pt}%      Space above
  {3pt}%      Space below
  {}%         Body font
  {}%         Indent amount (empty = no indent, \parindent = para indent)
  {\itshape}% Thm head font
  {:}%        Punctuation after thm head
  {.5em}%     Space after thm head: " " = normal interword space;
\newtheoremstyle{citing}% name
  {3pt}%      Space above, empty = `usual value'
  {3pt}%      Space below
  {\itshape}% Body font
  {}%         Indent amount (empty = no indent, \parindent = para indent)
  {\bfseries}% Thm head font
  {.}%        Punctuation after thm head
  {.5em}%     Space after thm head: " " = normal interword space;
\theoremstyle{citing}
\newtheoremstyle{break}% name
  {9pt}%      Space above, empty = `usual value'
  {9pt}%      Space below
  {\itshape}% Body font
  {}%         Indent amount (empty = no indent, \parindent = para indent)
  {\bfseries}% Thm head font
  {.}%        Punctuation after thm head
  {\newline}% Space after thm head: \newline = linebreak
  {}%         Thm head spec
\let\lvert=|\let\rvert=|
\title{Infinite groups with isomorphic power graph and commuting graph}
\author{Surbhi \footnote{Dr. B. R. Ambedkar University Delhi, Delhi 110006; \ E-mails: surbhi.21@stu.aud.ac.in, surbhi.ts19@gmail.com} \ and \ Geetha Venkataraman\footnote{Corresponding author, Dr. B. R. Ambedkar University Delhi, Delhi 110006; E-mails: geetha@aud.ac.in, geevenkat@gmail.com}}
\date{}
\begin{document}
\fontfamily{cmr}\selectfont
\maketitle
% -------------------------------------------------------------------
% Abstract

\bigskip
\noindent
{\small{\bf ABSTRACT:}}
 In this paper, we investigate certain graphs defined on groups, with a focus on infinite groups. The graphs discussed are the power graph, the enhanced power graph, and the commuting graph whose vertex set is a group $G$. The power graph is a graph in which two vertices are adjacent if one is some power of the other. In the enhanced power graph, an edge joins two vertices if they generate a cyclic subgroup of $G$. In the commuting graph, two vertices are adjacent if they commute in $G$. We prove a necessary and sufficient condition for any two of these graphs to be equal. This extends existing results for finite groups. In addition, we show that the power graph of the locally quaternion group is isomorphic to the commuting graph of the locally dihedral group. Lastly, we also answer a question posed by P. J. Cameron about the existence of groups $G_1$ and $G_2$ both of whom have power graph not equal to commuting graph but the power graph of $G_1$ and the commuting graph of $G_2$ are isomorphic.

\medskip
\noindent
{\small{\bf Keywords}{:} }
Groups, Power graph, Enhanced power graph, Commuting graph

\medskip
\noindent
{\small{\bf Mathematics Subject Classification-MSC2020}{:} }
20D60, 05C25, 20K10, 20F50, 20K21

\baselineskip=\normalbaselineskip

%=============================================================================================================================================================================

\section{Introduction}

Throughout the paper, by a graph, we mean an undirected simple graph. The power graph of a group $G$ is defined as a graph with vertex set $G$ and two vertices are adjacent if one is some power of the other. It was first defined by Kelarev and Quinn for semigroups \cite{Kelarev}. We denote it as Pow($G$). In \cite{Abdollahi}, Abdollahi and Hassanabadi introduced a graph on a group $G$ in which two vertices were joined by an edge if they generated a non-cyclic subgroup of the group $G$. They removed the isolated vertices from the vertex set of this graph. They called this the noncyclic graph of a group $G$. Later, in 2017, the complement of the noncyclic graph was independently defined in \cite{Aalipour} and was termed as the enhanced power graph. The enhanced power graph of a group $G$ is a graph with $G$ as its vertex set, and an edge joins two vertices if they generate a cyclic subgroup. It is denoted as EPow($G$). The isolated vertices are not removed from the vertex set of this graph. In 2021, P. J. Cameron \cite{PJC_reviewpaper} termed the sequence: power graph, enhanced power graph and commuting graph as the hierarchy since there are inclusions between these graphs. Let $E(\Gamma)$ denote the edge set of a graph $\Gamma$. For graphs $\Gamma_1$ and $\Gamma_2$, the notation $E(\Gamma_1) \subseteq E(\Gamma_2)$ implies that vertex set of $\Gamma_1$ is the same as vertex set of $\Gamma_2$ and $\Gamma_1$ has some of the edges of $\Gamma_2$, that is, $\Gamma_1$ is a spanning subgraph of $\Gamma_2$. If $x$ and $y$ are adjacent in Pow($G$), then clearly, the subgroup $\langle x,y \rangle$ is cyclic. Therefore, the vertices $x$ and $y$ are adjacent in EPow($G$) and we have $E($Pow($G$)) $\subseteq$ $E$(EPow($G$)). This inclusion can be strict as can be seen in the case of $\mathbb{Z}_6$, the cyclic group of order 6. Aalipour et al. \cite{Aalipour} proved that the power graph and the enhanced power graph of a finite group $G$ are equal if and only if $G$ does not have a subgroup isomorphic to $C_p \times C_q$ for distinct primes $p$ and $q$. We extend the result for any group $G$.

\begin{thm}\label{thm1}
    The power graph and the enhanced power graph of a group $G$ are equal if and only if $G$ does not have a subgroup isomorphic to $\mathbb{Z}$ or $C_p \times C_q$ where $p,q$ are distinct primes.
\end{thm}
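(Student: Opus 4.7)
The plan is to prove the biconditional in the standard two directions, using the always-valid inclusion $E(\mathrm{Pow}(G)) \subseteq E(\mathrm{EPow}(G))$ mentioned in the introduction, so that equality of the two graphs is equivalent to every edge of the enhanced power graph being an edge of the power graph.

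For the ``only if'' direction I would argue by contrapositive, exhibiting in each forbidden subgroup a single EPow-edge that is not a Pow-edge. If $H \le G$ with $H \cong \mathbb{Z} = \langle z \rangle$, I choose $x = z^2$ and $y = z^3$; since $\gcd(2,3) = 1$, Bezout gives $\langle x, y \rangle = \langle z \rangle$, so $\{x,y\}$ is an EPow-edge, while neither $z^2 = (z^3)^k$ nor $z^3 = (z^2)^k$ has an integer solution, so it is not a Pow-edge. If $H \le G$ with $H \cong C_p \times C_q$ for distinct primes $p,q$, the coprimality of $p,q$ makes $H$ cyclic of order $pq$; taking $x$ and $y$ to generate the two factors, $\langle x,y\rangle = H$ is cyclic, so $\{x,y\}$ is an EPow-edge, but $x = y^k$ would force $|x|=p$ to divide $|y|=q$, which is impossible, and symmetrically in the other direction.

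For the ``if'' direction, suppose $G$ contains neither $\mathbb{Z}$ nor any $C_p \times C_q$ with $p,q$ distinct primes, and let $\{x,y\}$ be an EPow-edge. Then $\langle x,y\rangle$ is cyclic, and the hypothesis forbids $\langle x,y\rangle \cong \mathbb{Z}$, so $\langle x,y\rangle \cong C_n$ for some positive integer $n$. In a finite cyclic group the subgroup lattice coincides with the divisor lattice of $n$; in particular $\langle x \rangle \subseteq \langle y\rangle$ iff $|x|\mid |y|$, and symmetrically. Thus one of $x,y$ is a power of the other precisely when one of $|x|,|y|$ divides the other. If neither divides the other, then $\lcm(|x|,|y|)$ must have at least two distinct prime divisors (a prime-power integer has its divisors totally ordered), say $p$ and $q$, and the cyclic group $\langle x, y\rangle$ of order $\lcm(|x|,|y|)$ therefore contains a cyclic subgroup of order $pq$, i.e.\ a copy of $C_p\times C_q$, contradicting the hypothesis.

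The main obstacle, modest but genuinely required to extend beyond the finite case, is handling the $\mathbb{Z}$-alternative separately. Without excluding infinite cyclic subgroups one loses the divisor-lattice step entirely: two elements like $z^2, z^3$ in $\mathbb{Z}$ generate the whole group yet neither is a power of the other, so an EPow-edge need not be a Pow-edge even though no $C_p \times C_q$ is involved. Once the infinite cyclic case is excluded at the outset, the rest of the argument reduces to elementary facts about subgroup lattices of finite cyclic groups.
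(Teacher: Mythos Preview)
Your proof is correct and follows essentially the same approach as the paper: the same witnesses $z^2,z^3$ for the $\mathbb{Z}$ case, the generators of the two prime-order factors for the $C_p\times C_q$ case, and for the converse the observation that a finite cyclic $\langle x,y\rangle$ of non--prime-power order contains a $C_p\times C_q$. The only organizational difference is that the paper first splits into the torsion and non-torsion cases and argues each separately, whereas you handle both uniformly by immediately excluding $\langle x,y\rangle\cong\mathbb{Z}$ via the hypothesis; the mathematical content is the same.
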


The commuting graph of a group $G$ is a graph with vertex set $G$, and two vertices $x$ and $y$ are adjacent if $xy=yx$. We denote this graph as Com($G$). This graph was introduced in 1955 by Brauer and Fowler \cite{Brauer}. For a group $G$, if two vertices $x,y$ are adjacent in EPow($G$) then they generate a cyclic subgroup of $G$. So, the elements $x$ and $y$ commute with each other. Therefore, they are adjacent in Com($G$). We get that $E$(EPow($G$)) $\subseteq$ $E$(Com($G$)). In \cite{Aalipour}. Aalipour et al. gave a necessary and sufficient condition for a finite group $G$ to satisfy EPow($G$) = Com($G$). The enhanced power graph and the commuting graph of a finite group $G$ are equal if and only if the group $G$ does not have a subgroup isomorphic to $C_p \times C_p$ for any prime $p$. We extend this to any group $G$.

\begin{thm}\label{thm2}
    The enhanced power graph and the commuting graph of a group $G$ are equal if and only if $G$ does not have a subgroup isomorphic to $\mathbb{Z} \times \mathbb{Z}$, $\mathbb{Z} \times C_p$ or $C_p \times C_p$ where $p$ is prime.
\end{thm}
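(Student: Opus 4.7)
The strategy mirrors the pattern of Theorem~\ref{thm1}: reduce the question to one about the structure of two-generated abelian groups by looking at $\langle x,y\rangle$ for a putative edge $\{x,y\}$ witnessing $E(\mathrm{EPow}(G))\neq E(\mathrm{Com}(G))$, and then apply the fundamental theorem of finitely generated abelian groups.

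\emph{Necessity} is routine. In each of the three ``forbidden'' groups, exhibit two commuting generators that fail to generate a cyclic group. Concretely, in $\mathbb{Z}\times\mathbb{Z}$, $\mathbb{Z}\times C_p$ and $C_p\times C_p$ the pair $((1,0),(0,1))$ commutes but spans the whole group, which is not cyclic. Thus if $G$ contains a subgroup $H$ isomorphic to one of these three groups, then the corresponding $x,y\in H\subseteq G$ are adjacent in $\mathrm{Com}(G)$ but not in $\mathrm{EPow}(G)$, so the two graphs differ.

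\emph{Sufficiency} is the substantive direction. Suppose $\mathrm{EPow}(G)\neq \mathrm{Com}(G)$; then there are $x,y\in G$ with $xy=yx$ and $\langle x,y\rangle$ non-cyclic. The plan is to analyse $H=\langle x,y\rangle$, which is abelian (hence two-generated as an abelian group) and therefore a finitely generated abelian group. By the structure theorem, $H\cong \mathbb{Z}^r\oplus F$ for some $r\geq 0$ and some finite abelian group $F$. I will split on $r$:
\begin{itemize}
  \item If $r\geq 2$, then $H$ already contains $\mathbb{Z}\times\mathbb{Z}$.
  \item If $r=1$, then non-cyclicity of $H$ forces $F\neq 1$; pick any element of prime order $p$ in $F$ to obtain a subgroup of the form $\mathbb{Z}\times C_p$ inside $H$.
  \item If $r=0$, then $H$ is a finite non-cyclic abelian group, so some Sylow $p$-subgroup of $H$ is non-cyclic, and such a $p$-group contains a copy of $C_p\times C_p$ via the socle.
\end{itemize}
In every case $G$ contains one of the forbidden subgroups, completing the proof.

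\textbf{Where the difficulty lies.} There is essentially no genuine obstacle: the proof is structural and the ``hard'' work is entirely absorbed by the classification of finitely generated abelian groups. The one place that requires a small amount of care is the $r=1$ case, where one must notice that $\mathbb{Z}\oplus F$ with $F\neq 1$ is never cyclic (any element with nonzero $\mathbb{Z}$-component has infinite order but cannot hit the torsion part, while any torsion element has finite order and so cannot generate $\mathbb{Z}$), justifying the passage from ``$H$ is non-cyclic'' to ``$F$ is nontrivial''. Once this point is recorded, the argument is a direct extension of the finite result of Aalipour et al.~\cite{Aalipour}, with $\mathbb{Z}$-summands accounting for the two new forbidden subgroups.
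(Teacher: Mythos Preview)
Your proposal is correct and follows essentially the same route as the paper: both argue by taking $x,y$ commuting with $\langle x,y\rangle$ non-cyclic, applying the structure theorem for finitely generated abelian groups, and splitting into the infinite-rank, rank-one, and finite cases to extract one of the three forbidden subgroups. Your write-up is in fact slightly more careful than the paper's (for instance, you justify why $F\neq 1$ in the $r=1$ case and invoke the socle for the finite case), but the underlying argument is the same.
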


Combining the above two theorems, we get the following result for the equality of power graph and commuting graph of $G$.

\begin{thm}\label{thm3}
    The power graph and the commuting graph of a group $G$ are equal if and only if $G$ does not have a subgroup isomorphic to $\mathbb{Z}$, $\mathbb{Z} \times \mathbb{Z}$, $\mathbb{Z} \times C_p$ or $C_p \times C_q$ where $p,q$ are primes, not necessarily distinct.
\end{thm}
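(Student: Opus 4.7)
The plan is to deduce Theorem \ref{thm3} directly from Theorems \ref{thm1} and \ref{thm2} using the hierarchy of inclusions $E(\text{Pow}(G)) \subseteq E(\text{EPow}(G)) \subseteq E(\text{Com}(G))$ already recalled in the introduction. Since all three graphs share the same vertex set $G$, the equality $\text{Pow}(G)=\text{Com}(G)$ is equivalent to the conjunction of the two intermediate equalities $\text{Pow}(G)=\text{EPow}(G)$ and $\text{EPow}(G)=\text{Com}(G)$. This sandwich observation is the only structural step in the argument.

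First I would prove necessity. Assuming $\text{Pow}(G)=\text{Com}(G)$, the chain of inclusions above forces $\text{EPow}(G)$ to coincide with both. Theorem \ref{thm1} then rules out any subgroup of $G$ isomorphic to $\mathbb{Z}$ or to $C_p\times C_q$ for \emph{distinct} primes $p,q$, while Theorem \ref{thm2} rules out any subgroup isomorphic to $\mathbb{Z}\times\mathbb{Z}$, $\mathbb{Z}\times C_p$, or $C_p\times C_p$. Taking the union of these two lists and observing that ``$C_p\times C_q$ for distinct primes'' together with ``$C_p\times C_p$'' covers exactly the family $C_p\times C_q$ where $p,q$ are primes (not necessarily distinct), yields precisely the forbidden subgroups listed in the statement.

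For sufficiency, suppose $G$ contains no subgroup isomorphic to any of $\mathbb{Z}$, $\mathbb{Z}\times\mathbb{Z}$, $\mathbb{Z}\times C_p$, or $C_p\times C_q$ with $p,q$ prime. Then in particular $G$ avoids the forbidden subgroups of Theorem \ref{thm1}, so $\text{Pow}(G)=\text{EPow}(G)$, and it also avoids those of Theorem \ref{thm2}, so $\text{EPow}(G)=\text{Com}(G)$. Composing the two equalities gives $\text{Pow}(G)=\text{Com}(G)$, which completes the argument.

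Since the whole proof is essentially bookkeeping on top of the two preceding theorems, I do not anticipate a genuine obstacle. The only point requiring slight care is the verification that the union of the two forbidden lists, after amalgamating the distinct-primes case of Theorem \ref{thm1} with the equal-primes case of Theorem \ref{thm2}, reproduces exactly the families named in Theorem \ref{thm3}, with no hidden omissions such as an independent obstruction arising from $\mathbb{Z}\times C_p$ versus $C_p\times C_q$.
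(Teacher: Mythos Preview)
Your proposal is correct and matches the paper's own treatment: the paper does not give a separate proof of Theorem~\ref{thm3} but simply states that it follows by combining Theorems~\ref{thm1} and~\ref{thm2}, exactly via the sandwich argument you describe. Your care in merging the ``distinct primes'' case from Theorem~\ref{thm1} with the ``equal primes'' case from Theorem~\ref{thm2} into the single family $C_p\times C_q$ with $p,q$ not necessarily distinct is the only detail worth spelling out, and you have handled it correctly.
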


The above results give us necessary and sufficient conditions for any group $G$ to have two of the three graphs equal. But what can we say about two different graphs of two different groups being equal? In \cite{PJC_reviewpaper}, the author P. J. Cameron asked the following question: \textit{Do there exist non-isomorphic groups $G$ and $H$ such that} Pow($G$)\textit{ is isomorphic to} Com($H$)\textit{?} We answered this question in \cite{Paper_1} with the generalised quaternion group and the dihedral group. We also observed that all three graphs of the generalised quaternion group are equal but for the dihedral group of 2-power order, the enhanced power graph equals the power graph but not the commuting graph. We proved that the power graph of the generalised quaternion group is isomorphic to the commuting graph of the dihedral group of the same order. In this paper, we consider the infinite counterparts of these groups and observe the equality amongst the graphs defined on them. The union of generalised quaternion groups of every order forms an infinite group called the locally quaternion group which we will denote by $Q_{2^{\infty}}$. Similarly, the union of all dihedral groups of order $2^n$ forms the locally dihedral group and is denoted by $D_{2^{\infty}}$. In this paper, we answer P. J. Cameron's question (stated above) for these infinite groups.

\begin{thm}\label{thm4}
    Let $Q_{2^{\infty}}$ be the locally quaternion group and $D_{2^{\infty}}$ be the locally dihedral group then {\rm Pow(}$Q_{2^{\infty}})$ is isomorphic to {\rm Com(}$D_{2^{\infty}})$.
\end{thm}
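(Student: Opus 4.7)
The plan is to analyze both graphs by decomposing each group into a maximal locally cyclic subgroup (a Pr\"ufer $2$-group) together with its complementary set, and then writing down an explicit bijection that matches the two pieces edge for edge.

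First I would recall the structure. The locally quaternion group $Q_{2^{\infty}}$ is the direct limit of the $Q_{2^n}$ for $n \ge 3$ under the inclusions $a_n \mapsto a_{n+1}^2$, $b_n \mapsto b_{n+1}$; the union of its cyclic subgroups $\langle a_n \rangle$ is a Pr\"ufer $2$-group $C_Q$, and every element $x$ of $S_Q := Q_{2^{\infty}} \setminus C_Q$ lies in some $Q_{2^n}$ outside $\langle a_n \rangle$, so has order $4$ with $x^2 = z_Q$, the unique involution of $C_Q$. Similarly $D_{2^{\infty}}$ has a Pr\"ufer $2$-group $C_D$ as its rotation subgroup, with unique involution $z_D$, and every element of $S_D := D_{2^{\infty}} \setminus C_D$ is an involution that inverts $C_D$ by conjugation.

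Next I would compute the open neighborhood of each vertex in each graph. The subgroup lattice of a Pr\"ufer $2$-group is a chain, so any two elements lie in a common finite cyclic subgroup; hence the induced subgraph of Pow($Q_{2^{\infty}}$) on $C_Q$ is complete. For $x \in S_Q$ of order $4$, the cyclic subgroup $\langle x \rangle = \{e, x, z_Q, x^{-1}\}$ immediately contributes neighbors $e, z_Q, x^{-1}$; no further element can be adjacent, since every element of $C_Q$ has all its powers inside $C_Q$, while any order-$4$ element of $S_Q$ has all its powers inside its own order-$4$ cyclic subgroup. Therefore the open neighborhood of $x$ in Pow($Q_{2^{\infty}}$) is exactly $\{e, z_Q, x^{-1}\}$. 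On the commuting side, $C_D$ is abelian so induces a complete subgraph in Com($D_{2^{\infty}}$). For a reflection $r \in S_D$ one checks directly that $r$ commutes with $c \in C_D$ iff $c = c^{-1}$, and with another reflection $r'$ iff $r r'$ has order at most $2$; this yields open neighborhood $\{e, z_D, r z_D\}$.

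With these descriptions in hand, observe that $S_Q$ is partitioned into pairs $\{x, x^{-1}\}$ and $S_D$ into pairs $\{r, r z_D\}$, and each of these pair-sets is countably infinite. I would then define an isomorphism $\phi : Q_{2^{\infty}} \to D_{2^{\infty}}$ by choosing any bijection $\phi_C : C_Q \to C_D$ with $\phi_C(e) = e$ and $\phi_C(z_Q) = z_D$, together with any pair-respecting bijection $\phi_S : S_Q \to S_D$, i.e.\ one satisfying $\phi_S(x^{-1}) = \phi_S(x)\, z_D$ for every $x \in S_Q$. Edge preservation then splits into three routine checks: inside $C_Q$ (complete to complete), across $C_Q$ and $S_Q$ (edges are exactly those incident to $\{e, z_Q\}$ on one side and $\{e, z_D\}$ on the other, matched by the conditions on $\phi_C$), and inside $S_Q$ (edges are exactly the matched pairs, preserved by the definition of $\phi_S$). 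The main point requiring care, more than an outright obstacle, is the neighborhood computation in Pow($Q_{2^{\infty}}$): one must rule out that an element of $C_Q$ of order $\ge 4$ is a power of some $x \in S_Q$ or vice versa, but both directions follow from $C_Q$ being closed under taking powers and from $S_Q$ consisting solely of order-$4$ elements whose cyclic subgroups meet $C_Q$ only in $\{e, z_Q\}$. Once this is nailed down, the construction of $\phi$ is purely combinatorial.
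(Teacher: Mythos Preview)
Your proposal is correct and the neighborhood computations are accurate, but the route differs from the paper's. The paper writes both groups as $C\langle x\rangle$ and $C\langle s\rangle$ over the \emph{same} Pr\"ufer $2$-group $C$ and takes the specific bijection $f(c)=c$, $f(cx)=cs$; it then argues locally, observing that any putative non-preserved edge already lives in some finite $Q_{2^{n+1}}$ and invoking the finite isomorphism ${\rm Pow}(Q_{2^{n+1}})\cong{\rm Com}(D_{2\cdot 2^n})$ established in an earlier paper to reach a contradiction. Your argument instead computes the full neighborhood structure in each graph directly and then uses an \emph{arbitrary} bijection respecting the two distinguished vertices and the pairing on the complement. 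What this buys you is self-containment: you never appeal to the finite case or to an external reference, and your argument makes the graph structure $(K_\infty \cup \infty K_2)\nabla K_2$ explicit (the paper states this structure in the discussion preceding the theorem but does not use it in the formal proof). Conversely, the paper's direct-limit argument is shorter once the finite case is in hand and avoids the case-by-case neighborhood verification; it also exhibits a canonical isomorphism rather than an existence statement.
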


Next, in \cite{Paper_1}, we considered the dicyclic groups because the generalised quaternion groups are a special case of the dicyclic group. The goal was to see whether the power graph of the dicyclic group is isomorphic to the commuting graph of the dihedral group of the same order. We observed that for the dicyclic group, its enhanced power graph is not equal to its power graph but is equal to its commuting graph. However, the three graphs are unequal for the dihedral group of order $4m$ where $m > 2$. In this paper, we consider the infinite quaternion group, $Q_{\infty}$ which contains the dicyclic groups of every order. We try to draw similarities between the graphs defined on dicyclic group and the infinite quaternion group.

P. J. Cameron also asked a stronger version of the above-mentioned question, namely, \textit{Can Pow($G_1$) and Com($G_2$) be isomorphic for groups $G_1$ and $G_2$ both of whom have their power graph not equal to their commuting graph?} In this paper, we answer this question positively by giving two non-isomorphic infinite groups that meet all the conditions in the question posed.

\begin{thm}\label{thm5}
    Let $D_{\infty}$ be the infinite dihedral group and $D_{2^{\infty}}$ be the locally dihedral group. Then {\rm Pow(}$D_{2^{\infty}})$ is isomorphic to {\rm Com(}$D_{\infty})$. Furthermore, both $D_{\infty}$ and $D_{2^{\infty}}$ have respective power graphs not equal to their commuting graphs.
\end{thm}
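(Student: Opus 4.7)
The plan is to show that Pow($D_{2^{\infty}}$) and Com($D_{\infty}$) have the same explicit structure: a countably infinite clique with one distinguished vertex to which countably many pendant vertices are attached. Once both graphs are shown to be of this form, any suitable bijection between vertex sets gives the isomorphism.

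First I would analyze Com($D_{\infty}$). Writing $D_{\infty} = \langle r,s \mid s^2=1,\ srs^{-1}=r^{-1}\rangle$, the rotation subgroup $\langle r\rangle \cong \mathbb{Z}$ is abelian, so it forms a clique in the commuting graph. A direct computation shows $(r^n s)(r^m) = r^{n-m}s$ while $(r^m)(r^n s) = r^{n+m}s$, so a nontrivial rotation commutes with a reflection only when trivial; similarly $(r^n s)(r^m s) = r^{n-m}$ and $(r^m s)(r^n s) = r^{m-n}$, forcing $n=m$ for commutativity. Hence the reflections are mutually nonadjacent and each is adjacent only to the identity in Com($D_{\infty}$).

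Next I would analyze Pow($D_{2^{\infty}}$), using $D_{2^{\infty}} = C_{2^{\infty}} \rtimes \langle s\rangle$. The decisive structural fact about the Prüfer $2$-group $C_{2^{\infty}}$ is that its subgroups form a chain $1 \subset C_2 \subset C_4 \subset \cdots$, each cyclic; therefore for any two nontrivial rotations $a,b$ of orders $2^i,2^j$ with $i\le j$, one has $\langle a\rangle \subseteq \langle b\rangle$ and $a$ is a power of $b$. Thus the rotation part is a clique in Pow($D_{2^{\infty}}$). Every reflection has order $2$, so its only nontrivial power is itself; a reflection cannot equal a power of a rotation (different cosets of $C_{2^{\infty}}$), and a rotation other than the identity cannot equal a power of an involution. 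Hence each reflection is adjacent only to the identity. This matches the structure of Com($D_{\infty}$), and since both the rotation sets (the Prüfer group on one side, $\mathbb{Z}$ on the other) and both the reflection sets are countably infinite, any bijection sending identity to identity, rotations to rotations, and reflections to reflections yields the required graph isomorphism.

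For the second assertion, I would invoke Theorem~\ref{thm3}. The group $D_{\infty}$ contains the subgroup $\langle r\rangle \cong \mathbb{Z}$, so Pow($D_{\infty}$) $\neq$ Com($D_{\infty}$). For $D_{2^{\infty}}$, the Prüfer $2$-group contains a unique involution $z$; the subgroup $\{1,z,s,zs\}$ is easily checked to be isomorphic to $C_2 \times C_2$ (each nonidentity element has order $2$ and they pairwise commute), so Theorem~\ref{thm3} again gives Pow($D_{2^{\infty}}$) $\neq$ Com($D_{2^{\infty}}$). The main conceptual step is noticing that the chain structure of subgroups in the Prüfer group converts the abelian clique behavior of $\mathbb{Z}$ in Com($D_{\infty}$) into a power-graph clique in $D_{2^{\infty}}$; everything else is bookkeeping on the reflections and a direct appeal to Theorem~\ref{thm3}.
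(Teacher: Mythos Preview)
Your proposal is correct and follows essentially the same approach as the paper: both arguments identify the common structure (a countable clique on the rotation coset together with countably many pendant reflections attached at the identity), verify this via the chain of cyclic subgroups in $C_{2^{\infty}}$ on one side and direct commutation checks on the other, and then conclude by a bijection matching identity, rotations, and reflections. Your treatment of the ``furthermore'' clause via Theorem~\ref{thm3} is likewise what the paper does, only the paper routes it through the intermediate corollaries.
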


In Section 2, we give the proofs of Theorems \ref{thm1} and \ref{thm2}. In Section 3, we describe the infinite groups $Q_{2^{\infty}}$, $D_{2^{\infty}}$, $Q_{\infty}$, $D_{\infty}$ and show the equalities within their graph hierarchy. Lastly, we prove Theorems \ref{thm4} and \ref{thm5}.

%=============================================================================================================================================================================
\section{Equality of Power, Enhanced Power and Commuting Graphs of a Group}

Aalipour et al. \cite{Aalipour} proved that for a finite group $G$, we have Pow($G$) = EPow($G$) if and only if $G$ does not contain a subgroup isomorphic to $C_p \times C_q$ for distinct primes $p$ and $q$. This result also holds for torsion groups as seen below. 
%Using the same reasoning given in \cite{Aalipour}, we can see that this condition holds for torsion groups as well, but for the groups which are not torsion groups, we show that equality can't hold.

\setcounter{thm}{0}
\begin{thm}\label{theo1}
    The power graph and the enhanced power graph of a group $G$ are equal if and only if $G$ does not have a subgroup isomorphic to $\mathbb{Z}$ or $C_p \times C_q$ where $p,q$ are distinct primes.
\end{thm}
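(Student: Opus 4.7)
The plan is to prove both directions, handling the forward implication by contrapositive. For the contrapositive I would exhibit, inside each forbidden subgroup, a pair of vertices joined in EPow$(G)$ but not in Pow$(G)$. If $\langle t\rangle\cong\mathbb{Z}$ sits inside $G$, the pair $(t^2,t^3)$ works: since $\gcd(2,3)=1$ we have $\langle t^2,t^3\rangle=\langle t\rangle$, which is cyclic (so the pair is adjacent in EPow$(G)$), but $2k\ne 3$ and $3k\ne 2$ for every $k\in\mathbb{Z}$, so neither element is a power of the other. If $C_p\times C_q\le G$ with $p\ne q$ primes and standard generators $x=(a,e)$ and $y=(e,b)$, then $xy$ has order $pq$, so $\langle x,y\rangle$ is cyclic (edge in EPow); any power of $x$ lies in $C_p\times\{e\}$ and thus cannot equal $y$, and symmetrically, so no edge in Pow.

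For the converse I assume Pow$(G)\ne$ EPow$(G)$ and, using $E(\mathrm{Pow}(G))\subseteq E(\mathrm{EPow}(G))$, pick $x,y\in G$ adjacent in EPow but not in Pow. Then $H:=\langle x,y\rangle$ is cyclic, yet neither generator is a power of the other. Write $H=\langle z\rangle$ and split on the order of $z$. If $z$ has infinite order, then $H\cong\mathbb{Z}$ and we are done. Otherwise $|z|=n$ is finite; if $n=p^k$ is a prime power, then the subgroups of the cyclic $p$-group $H$ are totally ordered by inclusion, so $\langle x\rangle\subseteq\langle y\rangle$ or vice versa, making one element a power of the other and contradicting the choice of $x,y$. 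Hence $n$ has at least two distinct prime divisors $p$ and $q$; the unique subgroup of $H$ of order $pq$ is cyclic of order $pq$, and since $\gcd(p,q)=1$ it is isomorphic to $C_p\times C_q$, so $G$ contains such a subgroup.

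The only subtle step is the prime-power dichotomy in the converse: noticing that the totally-ordered subgroup lattice of $C_{p^k}$ forces one of $\langle x\rangle,\langle y\rangle$ to contain the other, so that one element is a power of the other. Everything else is essentially a direct adaptation of the Aalipour et al.\ finite-group argument, with the new feature that an infinite cyclic $\langle x,y\rangle$ is now a legitimate outcome, witnessing $\mathbb{Z}\le G$ and thereby giving the extra forbidden subgroup appearing in the statement.
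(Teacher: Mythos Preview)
Your proof is correct and follows essentially the same approach as the paper: both directions use the same witnesses ($t^2,t^3$ inside $\mathbb{Z}$, the two prime-order generators inside $C_p\times C_q$) and the same converse argument (analyze the cyclic subgroup $\langle x,y\rangle$, split on infinite versus finite order, and in the finite case use that a cyclic $p$-group has a chain of subgroups). The only cosmetic difference is that the paper organizes the argument by first treating torsion $G$ and then non-torsion $G$, whereas you treat both uniformly via the order of the generator $z$; the mathematical content is the same.
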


\begin{proof}
    Let $G$ be a torsion group, that is, every element of $G$ is of finite order. Let $x,y$ be commuting elements of $G$ such that $|x|=p$ and $|y|=q$ where $p$ and $q$ are distinct primes. Since $(|x|,|y|)=1$ and $xy=yx$ therefore $xy$ has order $pq$. So, we have $\langle x,y \rangle = \langle xy \rangle$. Therefore, $x$ and $y$ are adjacent in EPow($G$) but not in Pow($G$). Conversely, let $G$ not have a subgroup isomorphic to $C_p \times C_q$. Let $x$ be adjacent to $y$ in EPow($G$), then $H = \langle x,y \rangle$ is cyclic. Since $G$ does not have a cyclic subgroup whose order is divisible by two distinct primes, the order of $H$ is a prime-power. Since the subgroup lattice of a prime-power order cyclic group is a chain, we get either $\langle x \rangle \subseteq \langle y \rangle$ or $\langle y \rangle \subseteq \langle x \rangle$. Therefore, $x$ is adjacent to $y$ in Pow($G$). Hence, we have Pow($G$) = EPow($G$). 
    
    If $G$ is not a torsion group, then $G$ has a subgroup isomorphic to $\mathbb{Z}$, say $H$. Let $x \in G$ be the generator of $H$. Then, the elements $x^2$ and $x^3$ are not adjacent in Pow($G$) but are adjacent in EPow($G$) as any subgroup of a cyclic group is cyclic. Conversely, let Pow($G$) $\neq$ EPow($G$) for an infinite group $G$. Let $x,y \in G$ such that $x$ is joined to $y$ in EPow($G$) but not in Pow($G$). The subgroup $K = \langle x,y \rangle $ is cyclic but neither $x$ is a power of $y$ nor $y$ is a power of $x$. If $K$ is infinite, then $K \cong \mathbb{Z}$. If $K$ is of finite order, then $K \cong \mathbb{Z}_n$ for some natural number $n$. If $n$ a is prime-power then $x$ and $y$ are adjacent in Pow($G$) which is a contradiction. Therefore, $n$ is divisible by at least two distinct primes $p,q$. Hence, the group $G$ has a subgroup isomorphic to $C_p \times C_q$.
\end{proof}

Another result from \cite{Aalipour} states that a finite group $G$ satisfies EPow($G$) = Com($G$) if and only if $G$ does not have a subgroup isomorphic to $C_p \times C_p$ for any prime $p$. 

\begin{thm}\label{theo2}
    The enhanced power graph and the commuting graph of a group $G$ are equal if and only if $G$ does not have a subgroup isomorphic to $\mathbb{Z} \times \mathbb{Z}$, $\mathbb{Z} \times C_p$  or $C_p \times C_p$ where $p$ is prime.
\end{thm}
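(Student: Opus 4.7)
The plan is to follow the same template as Theorem \ref{theo1}: for the easy direction, exhibit commuting elements that fail to generate a cyclic subgroup; for the hard direction, use the fact that $\langle x,y\rangle$ is a 2-generated abelian group whenever $x$ and $y$ commute, and invoke the structure theorem.

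For the forward direction, suppose $G$ has a subgroup isomorphic to one of $\mathbb{Z}\times\mathbb{Z}$, $\mathbb{Z}\times C_p$, or $C_p\times C_p$. In each case I would just take the obvious pair of generators of the two factors: these two elements commute in $G$, so they are adjacent in {\rm Com}$(G)$, but the subgroup they generate is non-cyclic (a direct product of two non-trivial cyclic groups of the specified orders), so they are not adjacent in {\rm EPow}$(G)$. This already gives {\rm EPow}$(G)\neq${\rm Com}$(G)$.

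For the converse, suppose {\rm EPow}$(G)\neq${\rm Com}$(G)$, and pick $x,y\in G$ that are adjacent in {\rm Com}$(G)$ but not in {\rm EPow}$(G)$. Then $xy=yx$, so $H=\langle x,y\rangle$ is abelian, and it is generated by two elements, yet it is not cyclic. The key step is to show that every non-cyclic 2-generated abelian group contains a copy of $\mathbb{Z}\times\mathbb{Z}$, $\mathbb{Z}\times C_p$, or $C_p\times C_p$. Using the Smith normal form, $H\cong \mathbb{Z}/d_1\mathbb{Z}\oplus \mathbb{Z}/d_2\mathbb{Z}$ with $d_1\mid d_2$, where $d_i=0$ is allowed and encodes a $\mathbb{Z}$-factor. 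A short case analysis finishes the argument: if $d_1=d_2=0$ then $H\cong \mathbb{Z}\times\mathbb{Z}$; if $d_2=0$ and $d_1>1$ then $H$ contains $C_p\times\mathbb{Z}$ for any prime $p\mid d_1$; if both $d_1,d_2>1$ with $d_1\mid d_2$ then picking a prime $p\mid d_1$ gives a $C_p\times C_p$ subgroup; and the remaining cases ($d_1=1$, or $d_1=d_2=1$) all make $H$ cyclic, contradicting the choice of $x,y$.

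The main obstacle, if any, is just formulating the case analysis cleanly via the structure theorem for finitely generated abelian groups so that all three prohibited subgroups arise as the only possibilities, and making sure the $d_1\mid d_2$ condition is used to produce equal prime orders in the $C_p\times C_p$ case. Nothing else in the proof is more than routine, since the commuting hypothesis immediately reduces the question to finitely generated abelian groups on two generators.
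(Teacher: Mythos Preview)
Your proposal is correct and follows essentially the same approach as the paper: both directions reduce to the structure theorem for finitely generated abelian groups applied to $\langle x,y\rangle$, with the easy direction handled by exhibiting the obvious non-cyclic generating pair. Your Smith normal form case analysis is in fact more explicit than the paper's version, which simply splits into the infinite case ($A\cong\mathbb{Z}\times\mathbb{Z}$ or $\mathbb{Z}\times\mathbb{Z}_n$) and the finite non-cyclic case (forcing a $C_p\times C_p$), but the underlying argument is identical.
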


\begin{proof}
    Let $G$ be an infinite group and suppose that EPow($G$) $\subsetneq$ Com($G$). Then, there exists $x$ and $y$ in $G$ such that $A = \langle x,y \rangle$ is not cyclic but $xy=yx$. Since $A$ is a finitely generated abelian group which is not cyclic, we will get that $A \cong \mathbb{Z} \times \mathbb{Z}$ or $A \cong \mathbb{Z} \times \mathbb{Z}_n$ for some natural number $n$ if $A$ is infinite. If $A$ is finite, then $A$ will have a subgroup isomorphic to $C_p \times C_p$ for some prime $p$. Thus, the group $G$ has a subgroup isomorphic to $\mathbb{Z} \times \mathbb{Z}$ or $\mathbb{Z} \times C_p$  or $C_p \times C_p$ for some prime $p$.
    Conversely, for $A = \mathbb{Z} \times \mathbb{Z} \mbox{ or } \mathbb{Z} \times C_p  \mbox{ or } C_p \times C_p $, the enhanced power graph of $A$ is not equal to its commuting graph. So, we will have EPow($G$) $\neq$ Com($G$) whenever $G$ has a subgroup isomorphic to $A$.
\end{proof}

\begin{cor}\label{hereditary}
    Suppose $G_1 \subseteq G_2 \subseteq \cdots \subseteq G_k \subseteq \cdots$ is a chain of groups. Let $G = \bigcup_{n \in \mathbb{N}} G_n $. Then, we have 
    \begin{enumerate}[{\rm (i)}]
        \item {\rm Pow(}$G${\rm ) = EPow(}$G${\rm )} if and only if {\rm Pow(}$G_i${\rm ) = EPow(}$G_i${\rm )} for every $i \in \mathbb{N}$. 
        \item {\rm EPow(}$G${\rm ) = Com(}$G${\rm )} if and only if {\rm EPow(}$G_i${\rm ) = Com(}$G_i${\rm )} for every $i \in \mathbb{N}$. 
    \end{enumerate}
\end{cor}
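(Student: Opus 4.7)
The plan is to reduce both (i) and (ii) directly to Theorems \ref{theo1} and \ref{theo2}, which characterize the relevant graph equalities by the absence of certain \emph{forbidden subgroups}: namely $\mathbb{Z}$ and $C_p\times C_q$ (distinct primes) for (i), and $\mathbb{Z}\times\mathbb{Z}$, $\mathbb{Z}\times C_p$, $C_p\times C_p$ for (ii). The crucial observation is that every one of these forbidden subgroups is finitely generated (on at most two generators).

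For the easy direction of each part, I would argue contrapositively: if some $G_i$ contains a forbidden subgroup $H$, then $H\subseteq G_i\subseteq G$, so $G$ also contains a forbidden subgroup, and the corresponding graphs of $G$ differ by Theorem \ref{theo1} or Theorem \ref{theo2}.

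For the harder direction, suppose $G$ itself contains a forbidden subgroup $H$. Write $H=\langle h_1,\ldots,h_k\rangle$ with $k\le 2$. Since $G=\bigcup_n G_n$ and the chain is ascending, for each $j$ there exists $n_j$ with $h_j\in G_{n_j}$; setting $N=\max_j n_j$ gives $h_1,\ldots,h_k\in G_N$, hence $H\subseteq G_N$. Thus some $G_N$ already contains a forbidden subgroup, and Theorem \ref{theo1} (resp.\ Theorem \ref{theo2}) forces Pow($G_N$) $\neq$ EPow($G_N$) (resp.\ EPow($G_N$) $\neq$ Com($G_N$)). This is the key step, and it is really the only content of the proof; there is no genuine obstacle beyond checking that $\mathbb{Z}$, $C_p\times C_q$, $\mathbb{Z}\times\mathbb{Z}$, $\mathbb{Z}\times C_p$, and $C_p\times C_p$ are all $2$-generated.

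Combining the two directions yields both biconditionals. I expect the write-up to be short: one paragraph per part, or even a single paragraph handling (i) and (ii) in parallel, since the argument is identical and depends only on the finite generation of the forbidden subgroup list supplied by Theorems \ref{theo1} and \ref{theo2}.
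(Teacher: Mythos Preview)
Your proposal is correct and follows essentially the same approach as the paper: both reduce to the forbidden-subgroup characterizations of Theorems~\ref{theo1} and~\ref{theo2}, then use that each forbidden subgroup is finitely generated to locate it inside some $G_N$ in the chain. Your treatment is slightly more uniform (you observe once that every forbidden subgroup is $2$-generated, whereas the paper argues case by case and leaves the easy direction implicit), but the underlying idea is identical.
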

\begin{proof}
    Let Pow($G_i$) = EPow($G_i$) for every $i \in \mathbb{N}$, that is for every $i$, the subgroup  $G_i$ does not contain a subgroup isomorphic to $\mathbb{Z}$ or $C_p \times C_q$ for distinct primes $p$ and $q$. Let if possible Pow($G$) $\neq$ EPow($G$). Then $G$ has a subgroup isomorphic to $\mathbb{Z}$ or $C_p \times C_q$ for some distinct primes $p$ and $q$, say $\langle y \rangle$. The element $y$ is either of infinite order or of order $pq$ respectively. The element $y$ belongs to $G_i$ for some $i$ which contradicts our assumption. For the second part, if EPow($G_i$) = Com($G_i$) for every $i \in \mathbb{N}$ and EPow($G$) $\neq$ Com($G$), then $G$ has a subgroup isomorphic to $\mathbb{Z} \times \mathbb{Z}$, $\mathbb{Z} \times C_p$  or $C_p \times C_p$ for some prime $p$, say $H$, whereas none of the $G_i$'s do. If $H$ is isomorphic to $\mathbb{Z} \times \mathbb{Z}$, $\mathbb{Z} \times C_p$, then $H$ has an element of infinite order which belongs to $G_i$ for some $i$ and that contradicts our assumption. If $H$ is isomorphic to $C_p \times C_p$, then $H$ has two distinct commuting elements of order $p$ which do not generate a cyclic group. This is also a contradiction because none of the $G_i$'s have any such elements by assumption.
\end{proof}

\section{Infinite Groups}

The finite groups studied in \cite{Paper_1} were the generalised quaternion group $Q_{2^{n+1}}$, the dicyclic group $Q_{4m}$ and the dihedral group $D_{2.2m}$. We described the power graph of $Q_{2^{n+1}}$, the power graph and enhanced power graph of $Q_{4m}$ and the commuting graph of $D_{2.2m}$. 

%If there is an odd prime dividing $m$, then $Q_{4m}$ has a subgroup isomorphic to $C_p \times C_q$ but no subgroup isomorphic to $C_p \times C_p$ and $D_{2.2m}$ has a subgroup isomorphic to $C_p \times C_p$ as well as $C_p \times C_q$. Therefore, Pow($Q_{4m}$) $\neq$ EPow($Q_{4m}$) = Com($Q_{4m}$) and Pow($D_{2.2m}$) $\neq$ EPow($D_{2.2m}$) $\neq$ Com($D_{2.2m}$). However, if $m$ is a power of 2, then the dihedral group's power graph and enhanced power graph are equal.

The generalised quaternion group is defined as $ Q_{2^{n+1}} = \langle h,x \mid h^{2^n} = x^4 = e , h^{2^{n-1}} = x^2 , x^{-1}hx = h^{-1} \rangle .$ A finite $p$-group having a unique subgroup of order $p$ is either cyclic or a generalised quaternion group. Since $Q_{2^{n+1}}$ has a unique element of order 2, namely $x^2$, its non-trivial non-cyclic subgroups are also generalised quaternion groups of smaller order. So, we get the following ascending series: $ Q_8 \subseteq Q_{16} \subseteq Q_{32} \subseteq Q_{64} \subseteq \cdots \subseteq \bigcup_{n=3}^{\infty} Q_{2^n} $. If we define $Q_2 = Q_4 = Q_8$, then $\bigcup_{n=3}^{\infty} Q_{2^n} = \bigcup_{n \in \mathbb{N}} Q_{2^n}$. We denote this union as $Q_{2^{\infty}}$, and we call this the locally quaternion group.  Note that this group is a torsion group, as the order of every element is some power of 2. Let $C_{2^{\infty}}$ denote the Pr\"{u}fer 2-group, an infinite abelian locally cyclic 2-group which is the union of a chain of cyclic 2-groups of order $2, 2^2, 2^3 , \ldots$. The group $C_{2^{\infty}}$ can be presented as $C_{2^{\infty}} = \langle g_1,g_2,g_3, \ldots \mid g_1^2=1, g_2^2=g_1,g_3^2=g_2, \ldots \rangle$. Note that each generalised quaternion group of order $2^{n+1}$ has a cyclic subgroup of order $2^n$. Hence, we get that $C_{2^{\infty}}$ is isomorphic to a subgroup of $Q_{2^{\infty}}$. Therefore, the locally quaternion group has the following presentation \cite{Kegel_Bertram}

\begin{equation*}
\begin{split}
Q_{2^{\infty}} = \langle C,x \mid  & \ C \cong C_{2^{\infty}} , x^{-1}cx= c^{-1} \mbox{ for every } c \in C, \\
 & \ x^2 = z \mbox{ where } z \mbox{ is the unique element of } C_{2^{\infty}} \mbox{ of order } 2  \rangle .
\end{split}
\end{equation*}

%\begin{multline*}
%\langle C,x \mid C \cong C_{2^{\infty}} , x^{-1}cx= c^{-1} \mbox{ for every } c \in C,\\ 
% x^2 = z \mbox{ where } z \mbox{ is the unique element of } C_{2^{\infty}} \mbox{ of order } 2  \rangle .
%\end{multline*}

%\hspace{1cm}$ \langle C,x \mid C \cong C_{2^{\infty}} , x^{-1}cx= c^{-1} \mbox{ for every } c \in C,\\ \mbox{\hspace{2cm}} x^2 = z \mbox{ where } z \mbox{ is the unique element of } C_{2^{\infty}} \mbox{ of order } 2  \rangle .$\\

\noindent The order of every element of $Q_{2^{\infty}}$ is some power of 2 and the only finite subgroups of $Q_{2^{\infty}}$ are $C_{2^n}$ and $Q_{2^{n+1}}$ for all $n \in \mathbb{N}$. This is because if $A$ is a finite subgroup of $Q_{2^{\infty}}$, then $A$ is a subgroup of $Q_{2^{m+1}}$ for some natural number $m$, and since every subgroup of $Q_{2^{m+1}}$ is either cyclic of 2-power order or a generalised quaternion, therefore $A$ has to be either cyclic or generalised quaternion. Since the group $Q_{2^{n+1}}$ does not have a subgroup isomorphic to $C_p \times C_q$ for any primes $p \mbox{ and }q$, we have Pow($Q_{2^{n+1}}$) = EPow($Q_{2^{n+1}}$) = Com($Q_{2^{n+1}}$). Therefore, by Theorems \ref{theo1} and \ref{theo2}, we have $ \mbox{Pow}( Q_{2^{\infty}}) = \mbox{EPow}( Q_{2^{\infty}}) = \mbox{Com}( Q_{2^{\infty}})$. This can also be verified by using the Corollary \ref{hereditary}.\\

The dihedral group of order $2^{n+1}$ has the following presentation: $ D_{2.2^n} = \langle r,s \mid r^{2^n} = s^2 = e , srs = r^{-1} \rangle .$ It is well-known that $ D_8 \subseteq D_{16} \subseteq D_{32} \subseteq D_{64} \subseteq \cdots \subseteq \bigcup_{n \in \mathbb{N}} D_{2^n} = D_{2^{\infty}} $ where $D_2=D_4=D_8$. The group $D_{2^{\infty}}$ is called the locally dihedral group. It has the following presentation \cite{Kegel_Bertram}
$$ D_{2^{\infty}} = \langle C,s \mid C \cong C_{2^{\infty}}, s^2 = 1 , scs= c^{-1} \mbox{ for every } c \in C \rangle .$$

\noindent Since $D_{2^{\infty}}$ is a torsion group and the order of each element is a power of 2, it can not have subgroups isomorphic to $\mathbb{Z}$, $\mathbb{Z} \times \mathbb{Z}$, $\mathbb{Z} \times C_p$  or $C_p \times C_q$ for distinct primes $p$ and $q$. Since the 2-group $D_{2.2^n}$ contains $2^{n-2}$ subgroups isomorphic to $C_2 \times C_2$, we have $ \mbox{Pow}( D_{2^{\infty}}) = \mbox{EPow}( D_{2^{\infty}}) \subsetneq \mbox{Com}( D_{2^{\infty}}) $. This also follows from Corollary \ref{hereditary}.

This gives us the following result.
\begin{cor}\label{Cor_1}
    \begin{enumerate}[{\rm (i)}]
        \item The power graph, enhanced power graph and commuting graph of the locally quaternion group $Q_{2^{\infty}}$ are equal.
        \item The enhanced power graph of the locally dihedral group $D_{2^{\infty}}$ equals its power graph but not its commuting graph.
    \end{enumerate}
\end{cor}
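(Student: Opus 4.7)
The plan is to deduce both parts of the corollary from Theorems \ref{theo1} and \ref{theo2} (equivalently, from Corollary \ref{hereditary}), by checking which of the forbidden subgroups listed in those theorems can actually occur inside $Q_{2^\infty}$ and which inside $D_{2^\infty}$.

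For part (i), I would first note that $Q_{2^\infty}$ is a torsion group in which every element has order a power of $2$, since each element lies in some $Q_{2^{n+1}}$. This immediately rules out the subgroups $\mathbb{Z}$, $\mathbb{Z}\times\mathbb{Z}$, $\mathbb{Z}\times C_p$, and $C_p\times C_q$ for distinct primes $p,q$. The only remaining forbidden subgroup from Theorem \ref{theo2} is $C_p\times C_p$; since $p$ would have to be $2$, this reduces to showing that $Q_{2^\infty}$ contains no Klein four-subgroup. Using the structural fact recorded earlier in the section that every finite subgroup of $Q_{2^\infty}$ is either cyclic of $2$-power order or a generalised quaternion group, and that each such group has a unique involution, no $C_2\times C_2$ can sit inside $Q_{2^\infty}$. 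Theorems \ref{theo1} and \ref{theo2} then give Pow$(Q_{2^\infty})$ = EPow$(Q_{2^\infty})$ = Com$(Q_{2^\infty})$. A slicker alternative: since each $Q_{2^{n+1}}$ already satisfies the three equalities, Corollary \ref{hereditary} applied to the chain $Q_8 \subseteq Q_{16} \subseteq Q_{32} \subseteq \cdots$ delivers the same conclusion in one step.

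For part (ii), the same $2$-group reasoning shows that $D_{2^\infty}$ contains no copy of $\mathbb{Z}$, $\mathbb{Z}\times\mathbb{Z}$, $\mathbb{Z}\times C_p$, nor $C_p\times C_q$ for distinct primes $p,q$, so Theorem \ref{theo1} yields Pow$(D_{2^\infty})$ = EPow$(D_{2^\infty})$. To witness that the inclusion into Com$(D_{2^\infty})$ is strict, I would exhibit an explicit Klein four-subgroup: inside any $D_{2\cdot 2^n}$ with $n\ge 2$, the central involution $r^{2^{n-1}}$ commutes with the reflection $s$, and together they generate a subgroup isomorphic to $C_2\times C_2$. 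This is a forbidden subgroup in the sense of Theorem \ref{theo2}, forcing EPow$(D_{2^\infty}) \subsetneq$ Com$(D_{2^\infty})$. Again Corollary \ref{hereditary} provides an equivalent one-line argument: for $n\ge 2$ each $D_{2\cdot 2^n}$ already has EPow $\ne$ Com, so the union does as well.

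The main obstacle is not a genuine proof difficulty, since Theorems \ref{theo1}, \ref{theo2} and Corollary \ref{hereditary} do all the real work. The only bookkeeping worth handling carefully is the identification of the finite subgroups of $Q_{2^\infty}$ (needed to inherit the unique-involution property, and thereby rule out $C_2\times C_2$) and the explicit exhibition of a Klein four-subgroup inside $D_{2^\infty}$ to separate the commuting graph from the enhanced power graph.
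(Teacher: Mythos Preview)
Your proposal is correct and follows essentially the same route as the paper: the discussion immediately preceding the corollary applies Theorems \ref{theo1} and \ref{theo2} by noting that both groups are torsion $2$-groups (ruling out $\mathbb{Z}$, $\mathbb{Z}\times\mathbb{Z}$, $\mathbb{Z}\times C_p$, and $C_p\times C_q$ for distinct primes), observes that $Q_{2^{n+1}}$ contains no $C_p\times C_p$ while $D_{2\cdot 2^n}$ does contain copies of $C_2\times C_2$, and then remarks that Corollary \ref{hereditary} gives the same conclusion. Your version is slightly more explicit in exhibiting the Klein four-subgroup and in invoking the unique-involution property, but the argument is the same.
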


We use the symbol $ \Gamma_1 \cup \Gamma_2$ to describe the disjoint union of graphs, the symbol $ \Gamma_1 \underline\cup \Gamma_2$ to describe the non-disjoint union of graphs and the symbol $ \Gamma_1 \nabla \Gamma_2$ to describe the graph join of graphs $\Gamma_1$ and $\Gamma_2$. The union of graphs $ \Gamma_1$ and $ \Gamma_2$ is a graph whose vertex set is the union of $V(\Gamma_1)$ and $V(\Gamma_2)$ and the edge set is the union of $E(\Gamma_1)$ and $E(\Gamma_2)$. In $ \Gamma_1 \cup \Gamma_2$, we consider the vertices and edges of $ \Gamma_1$ and $\Gamma_2$ distinct regardless of their labels, whereas in $ \Gamma_1 \underline\cup \Gamma_2$, we take the graph union by merging labelled vertices and edges. We consider $ \Gamma_1$ and $\Gamma_2$ with disjoint vertices to define the graph join. The graph join $ \Gamma_1 \nabla \Gamma_2$ is a graph whose vertex set is $V(\Gamma_1) \cup V(\Gamma_2)$ and the edge set is $E(\Gamma_1) \cup E(\Gamma_2)$ together with all the edges joining $V(\Gamma_1)$ and $V(\Gamma_2)$. The symbol $K_n$ is used for a complete graph on $n$ vertices.

We described the following structure of the power graph of $Q_{2^{n+1}}$ in \cite{Paper_1}. In Pow($Q_{2^{n+1}}$), the elements $e$ and $x^2$ are universal vertices. If we look at the induced subgraph on $Q_{2^{n+1}} \symbol{92} \{ e,x^2 \}$, we have one copy of $K_{2^n-2}$ and $2^{n-1}$ copies of $K_2$. All these complete graphs are disjoint from each other. We have Pow($Q_{2^{n+1}}$) = $(K_{2^n-2}$ $\cup$ $(2^{n-1})K_2)$ $\nabla$ $K_2$. Now, in Pow($Q_{2^{\infty}}$), we observe that the identity element and the unique involution are universal vertices, and after removing them, we get one copy of a complete graph on infinite vertices and infinite copies of $K_2$. Again, all these complete graphs are disjoint from each other. This gives us Pow($Q_{2^{\infty}}$) = $(K_{\infty}$ $\cup$ $(\infty)K_2)$ $\nabla$ $K_2$. Here $K_{\infty}$ represents a complete graph on countably infinitely many vertices. Similarly, $({\infty})K_2$ also represents countably infinitely many copies of $K_2$, a complete graph on 2 vertices. Figure \ref{com_g} provides a depiction of this. We observe that the structure of the power graph of $Q_{2^{n+1}}$ is carried forth to the power graph of its infinite counterpart, $Q_{2^{\infty}}$. By Theorem 1 of \cite{Paper_1}, the power graph of the generalised quaternion group is isomorphic to the commuting graph of the dihedral group of the same order. In Com($D_{2^{\infty}}$), the identity element and involution from the Pr\"{u}fer group are universal vertices. After removing them, the remaining vertices of the Pr\"{u}fer group form a complete graph and the remaining elements of $D_{2^{\infty}}$, that are of the form $xs$ where $x \in C$, form infinite copies of $K_2$. These complete graphs are again disjoint from each other. This implies that the structure of the commuting graph of $D_{2.2^n}$ is carried forth to the commuting graph of its infinite counterpart, $D_{2^{\infty}}$. 
We can now show that there exist non-isomorphic groups $G$ and $H$ of infinite order such that the power graph of $G$ is isomorphic to the commuting graph of $H$.

\setcounter{thm}{3}

\begin{thm}\label{theo4}
    Let $Q_{2^{\infty}}$ be the locally quaternion group and $D_{2^{\infty}}$ be the locally dihedral group then {\rm Pow(}$Q_{2^{\infty}})$ is isomorphic to {\rm Com(}$D_{2^{\infty}})$.
\end{thm}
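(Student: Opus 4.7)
The plan is to show that both Pow$(Q_{2^{\infty}})$ and Com$(D_{2^{\infty}})$ realise the same graph-theoretic shape $(K_{\infty} \cup (\infty)K_2) \nabla K_2$ already sketched in the discussion immediately preceding the theorem, and then to write down an explicit vertex bijection that clearly preserves adjacency. Since the two structural descriptions have already been outlined informally, the work is to make each of them rigorous and then align them.

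First I would pin down the structure of Pow$(Q_{2^{\infty}})$. The identity $e$ and the unique involution $z = x^2$ are universal vertices: every $g \in Q_{2^{\infty}}$ has order $2^k$ for some $k \geq 0$, so $z = g^{2^{k-1}} \in \langle g \rangle$ whenever $k \geq 1$. After deleting $e$ and $z$, the elements of $C \setminus \{e, z\}$ sit inside a Pr\"ufer $2$-group whose subgroup lattice is a chain, so any two of them are comparable and they together induce $K_{\infty}$. Elements outside $C$ have the form $cx$ with $c \in C$; the relation $x^{-1}cx = c^{-1}$ gives $(cx)^2 = z$, whence $\langle cx \rangle = \{e, cx, z, (cx)^{-1}\}$ has order $4$ and $(cx)^{-1} = czx$. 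Thus the non-universal elements outside $C$ split into pairs $\{cx, czx\}$, each pair forming a $K_2$, and two different pairs carry no edge in Pow$(Q_{2^{\infty}})$ because distinct cyclic subgroups of order $4$ intersect only in $\{e, z\}$.

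Next I would mirror the analysis on the commuting-graph side. In Com$(D_{2^{\infty}})$ the identity and the unique involution $z \in C$ are universal: $z$ commutes with all of $C$ trivially, and $szs^{-1} = z^{-1} = z$ yields $z \cdot cs = cs \cdot z$ for every $c \in C$. The induced subgraph on $C \setminus \{e, z\}$ is $K_{\infty}$ since $C$ is abelian. For elements of the form $cs$, the relation $scs^{-1} = c^{-1}$ gives $(cs)(c's) = c(c')^{-1}$ and $(c's)(cs) = c'c^{-1}$, so $cs$ and $c's$ commute exactly when $c(c')^{-1}$ has order dividing $2$, i.e.\ $c' \in \{c, cz\}$; a parallel check shows that no $cs$ commutes with any element of $C \setminus \{e, z\}$. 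Hence the pairs $\{cs, czs\}$ provide the $(\infty)K_2$ part, and the overall structure again matches $(K_{\infty} \cup (\infty)K_2) \nabla K_2$.

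Finally I would define $\varphi : Q_{2^{\infty}} \to D_{2^{\infty}}$ by fixing an identification of the two Pr\"ufer subgroups $C$, setting $\varphi(c) = c$ for $c \in C$ and $\varphi(cx) = cs$ for $c \in C$. This $\varphi$ is a bijection of vertex sets that fixes the two universal vertices, restricts to an isomorphism of the two $K_{\infty}$'s on $C \setminus \{e, z\}$, and sends the pair $\{cx, czx\}$ in Pow$(Q_{2^{\infty}})$ onto the pair $\{cs, czs\}$ in Com$(D_{2^{\infty}})$; there are no cross-edges on either side. The main obstacle, though modest, is the commutator bookkeeping needed to identify the pair structure precisely on each side; once $(cx)^{-1} = czx$ and the commuting partner of $cs$ being $czs$ have both been verified by the same dihedral-style calculation, the graph isomorphism is immediate.
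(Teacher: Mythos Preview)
Your argument is correct, and the bijection $\varphi(c)=c$, $\varphi(cx)=cs$ you write down is exactly the map the paper uses. The difference lies in how edge-preservation is verified. You carry out the structural analysis of both graphs in full---computing $(cx)^{-1}=czx$, checking that $cs$ and $c's$ commute iff $c'\in\{c,cz\}$, ruling out cross-edges, and so on---thereby realising both graphs concretely as $(K_{\infty}\cup(\infty)K_2)\nabla K_2$ and matching the pieces. The paper instead argues locally: any pair of vertices $a,b\in Q_{2^{\infty}}$ already lies in some finite $Q_{2^{n+1}}$, and the restriction of $\varphi$ to $Q_{2^{n+1}}$ is the graph isomorphism ${\rm Pow}(Q_{2^{n+1}})\cong{\rm Com}(D_{2\cdot 2^n})$ established in the authors' earlier paper, so adjacency is preserved by reduction to the finite case. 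Your route is self-contained and makes the graph structure explicit; the paper's route is shorter but imports the finite theorem as a black box.
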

\begin{proof}
    Let $Q_{2^{\infty}}$ be the locally quaternion group. Using the presentation of $Q_{2^{\infty}}$ given above, we get that $Q_{2^{\infty}} = C \langle x \rangle$ where $C$ is the Pr\"ufer 2-group. Since $|x|=4$ and $x^2 \in C$ is the unique involution in $Q_{2^{\infty}}$, we see that $Q_{2^{\infty}}$ is the disjoint union of cosets $C$ and $Cx$. Let $D_{2^{\infty}}$ be the locally dihedral group. By the presentation of $D_{2^{\infty}}$, we have that $D_{2^{\infty}}$ is the semi-direct product of $C$ and $\langle s \rangle$. This implies that $D_{2^{\infty}}$ is the disjoint union of cosets $C$ and $Cs$.
    
    %Since the Pr\"ufer group $C_{2^{\infty}}$ is an abelian group, therefore $C$, a subset of the generating set of $Q_{2^{\infty}}$, forms an abelian subgroup. Also, since $x^{-1}cx= c^{-1} \mbox{ for every } c \in C$, we get that every element of $Q_{2^{\infty}}$ can be written as $gx^i$ where $0 \leq i \leq 3$ and $g \in C$. Now, since this group has a unique involution, every element is of the form $g'x^i$ where $0 \leq i \leq 1$ and $g' \in C$. Let $D_{2^{\infty}}$ be the locally dihedral group. As seen above in the presentation of the group $D_{2^{\infty}}$, its generators satisfy the relation $scs= c^{-1} \mbox{ for every } c \in C$. It gives us that every element of this group has the form $gs^j$ where $0 \leq j \leq 3$ and $g \in C$. Note that $j$ ranges from 0 to 3 instead of 0 to 1 (unlike $i$) since $D_{2^{\infty}}$ does not have a unique involution.  
    
    Now, we define a map $f : Q_{2^{\infty}} \rightarrow D_{2^{\infty}}$ as $f(c) = c$ for every $c \in C$ and $f(cx) = cs$ otherwise. %which sends the Pr\"{u}fer group to the Pr\"{u}fer group and $x$ to $s$.
    This map is one-one and onto. Suppose that the map does not preserve edges between Pow($Q_{2^{\infty}}$) and Com($D_{2^{\infty}}$). Then, there exist elements $a,b \in Q_{2^{\infty}}$ and $f(a),f(b) \in D_{2^{\infty}}$ such that one pair is adjacent and the other is not. Let $a$ be adjacent to $b$ and $f(a)$ not adjacent to $f(b)$. This implies that for some suitable $n \in \mathbb{N}$, $a$ is adjacent to $b$ in Pow($Q_{2^{n+1}}$). By Theorem 1 in \cite{Paper_1}, restriction of $f$ to $Q_{2^{n+1}}$ is an isomorphism and $f(a),f(b) \in D_{2.2^n}$. This gives us that $f(a)$ is adjacent to $f(b)$ in Com($D_{2.2^n}$). But Com($D_{2.2^n}$) is a subgraph of Com($D_{2^{\infty}}$) which gives us a contradiction. Hence, if $a$ is adjacent to $b$ then $f(a)$ is adjacent to $f(b)$. The converse is similar. 
\end{proof}

Next, we consider the infinite quaternion group $Q_{\infty}$ and some of its properties. Consider the subgroup of ${\rm GL}(2,\mathbb{C})$ given by $G = \left\{ x_a = \begin{pmatrix}
  a & 0\\ 
  0 & a^{-1}
\end{pmatrix} \mid a \in \mathbb{C}^* \right\}$ and the matrix $j = \begin{pmatrix}
  0 & i\\ 
  i & 0
\end{pmatrix} \in {\rm GL}(2,\mathbb{C})$. Then, the subgroup $G$ is abelian and the matrices $j$ and $x_a$ satisfy the relations $j^2 = -I$ and $jx_aj^{-1}=x_a^{-1}=x_{a^{-1}}$ for every $x_a \in G$. We take $Q_{\infty} = G\langle j \rangle \leq {\rm GL}(2,\mathbb{C})$.

\begin{lemma}\label{q_infinity_prop}
    Let $Q_{\infty}$ be the infinite quaternion group described above. Then
    \begin{enumerate}[{\rm (i)}]
        \item Every element of $Q_{\infty}$ can be written as $x_aj^{\alpha}$ where $a \in \mathbb{C}^*$ and $\alpha = 0,1$.
        \item The element $x_a$ has finite order n if and only if $a$ is a primitive $n^{th}$ root of unity.
        \item The element $x_aj$ has order 4 and $\langle x_aj \rangle = \{ I, -I, x_aj, x_{a^{-1}}j \}$.
        \item The group $Q_{\infty}$ contains a copy of the dicyclic group $Q_{4m}$ for every $m$.
        \item The locally quaternion group $Q_{2^{\infty}}$ can be regarded as a subgroup of $Q_{\infty}$.
    \end{enumerate}
\end{lemma}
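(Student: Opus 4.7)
The plan is to establish each of the five parts by direct matrix computation in ${\rm GL}(2,\mathbb{C})$, using only the two relations $j^2 = -I$ and $j x_a j^{-1} = x_{a^{-1}}$ given in the setup above the lemma.

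For (i), I would note that $j^2 = -I = x_{-1}$ lies in $G$, so the cyclic group $\langle j\rangle$ meets $G$ nontrivially; combined with the relation $j x_a j^{-1} = x_{a^{-1}} \in G$, which shows $G$ is normalised by $j$, the product $G\langle j\rangle$ breaks up as a disjoint union of two cosets $G \cup Gj$, so every element has the form $x_a$ or $x_a j$. For (ii), from the diagonal form $x_a^n = x_{a^n}$, we see that $x_{a^n} = I$ iff $a^n = 1$, so the order of $x_a$ is exactly the multiplicative order of $a$ in $\mathbb{C}^*$.

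For (iii), the central calculation is
\[
(x_a j)^2 \;=\; x_a (j x_a j^{-1}) j^2 \;=\; x_a\, x_{a^{-1}}\, (-I) \;=\; -I,
\]
so $(x_a j)^4 = I$ while $(x_a j)^2 = -I \neq I$, forcing the order of $x_a j$ to be exactly 4; enumerating the four powers then identifies $\langle x_a j\rangle$ as stated. For (iv), I would choose a primitive $2m$th root of unity $\omega \in \mathbb{C}^*$ and send the generators of $Q_{4m} = \langle h,k \mid h^{2m}=1,\, k^2=h^m,\, k^{-1}hk=h^{-1}\rangle$ to $x_\omega$ and $j$ respectively. By (ii), $x_\omega$ has order exactly $2m$; by (iii), $j$ has order 4 with $j^2 = -I = x_{-1} = x_{\omega^m}$; and the defining conjugation relation yields $j^{-1} x_\omega j = x_{\omega^{-1}}$. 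The resulting homomorphism therefore respects the presentation, and since its image has exactly $4m$ elements, it is injective.

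For (v), I would pick a compatible system of primitive $2^n$th roots of unity $\omega_n \in \mathbb{C}^*$ satisfying $\omega_{n+1}^2 = \omega_n$ for every $n$. By (iv), each $\langle x_{\omega_n}, j\rangle$ is isomorphic to $Q_{2^{n+1}}$, and the compatibility forces these subgroups to form an ascending chain inside $Q_\infty$; taking their union and matching against $Q_{2^\infty} = \bigcup_n Q_{2^{n+1}}$ produces the required embedding. The one place that genuinely needs care is the sign-tracking in (iii) --- once $(x_a j)^2 = -I$ has been pinned down, the remaining parts reduce to verification of defining relations and a standard direct-limit argument.
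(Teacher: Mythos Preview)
Your proposal is correct and follows essentially the same approach as the paper's own proof, which likewise derives (i) from $j^2=x_{-1}\in G$, (ii) from $(x_a)^n=x_{a^n}$, (iii) by direct calculation, and (iv)--(v) by taking $a$ to be a primitive $2m$th root of unity so that $\langle x_a,j\rangle\cong Q_{4m}$. Your treatment is in fact more explicit than the paper's, particularly the computation $(x_aj)^2=-I$ in (iii) and the compatible system $\omega_{n+1}^2=\omega_n$ in (v), whereas the paper simply asserts these steps.
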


\begin{proof}
    The first part follows as $Q_{\infty} = G\langle j \rangle$ and $x_aj^2 = x_{-a} \in G$. For the second part, observe that we have $(x_a)^n = x_{a^n}$. The next part can be verified by direct calculation. Lastly, note that if $a$ is a primitive $2m^{th}$ root of unity, then the subgroup $\langle x_a,j \rangle$ is isomorphic to the dicyclic group $Q_{4m}$. Thus, every generalised quaternion group can be regarded as a subgroup of $Q_{\infty}$. Consequently, the group $Q_{\infty}$ will also have a copy of $Q_{2^{\infty}}$ as its subgroup.
\end{proof}

\begin{comment}
\begin{proof}
    The first part follows from the relation $jxj^{-1} = x^{-1}$ in the presentation of $Q_{\infty}$. For $ x = \begin{pmatrix}
  a & 0\\ 
  0 & 1/a
\end{pmatrix}$, the order of $x \in G$ is equal to the order of $a \in \mathbb{G}_m$ and $a$ is of finite order in $\mathbb{G}_m$ if and only if it is a root of unity. When $\alpha > 0$, a simple calculation proves the third part. Lastly, for $a = e^{\pi i/n}$, the primitive $2n^{th}$ root of unity, the group generated by
$ \begin{pmatrix}
  a & 0\\ 
  0 & 1/a
\end{pmatrix}$ and  
$\begin{pmatrix}
  0 & i\\ 
  i & 0
\end{pmatrix}$ is the dicyclic group $ Q_{4n} $. Therefore, the infinite quaternion group contains dicyclic groups of all orders. Since a generalised quaternion group is a special case of the dicyclic group. Hence, the group $Q_{\infty}$ has $Q_{2^{\infty}}$ as its subgroup.
\end{proof}
\end{comment}

\begin{prop}
    Let $Q_{\infty}$ denote the infinite quaternion group and $Q_{4n}$ denote the dicyclic group.
    \begin{enumerate}[{\rm (i)}]
        \item $E$(Pow($Q_{\infty}$) $\subsetneq$ $E$(EPow($Q_{\infty}$) $\subsetneq$ $E$(Com$Q_{\infty}$).
        %\item The power graph of $Q_{4n}$ is not carried forth to the power graph of $Q_{\infty}$.
        \item The structure of commuting graph of $Q_{4n}$ is carried forth to the commuting graph of $Q_{\infty}$.
        %\item The enhanced power graph of $Q_{4n}$ is not carried forth to the enhanced power graph of $Q_{\infty}$.
    \end{enumerate}
\end{prop}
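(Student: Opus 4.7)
The plan for part (i) is to apply Theorems~\ref{theo1} and~\ref{theo2} directly by exhibiting the forbidden subgroups inside $Q_\infty$. The abelian diagonal subgroup $G=\{x_a:a\in\mathbb{C}^*\}$ is isomorphic to $\mathbb{C}^*$, and by Lemma~\ref{q_infinity_prop}(ii) the element $x_a$ has infinite order precisely when $a$ is not a root of unity. Hence $\langle x_2\rangle\cong\mathbb{Z}$, which by Theorem~\ref{theo1} forces $E(\mathrm{Pow}(Q_\infty))\subsetneq E(\mathrm{EPow}(Q_\infty))$. Since $2$ and $3$ are multiplicatively independent in $\mathbb{C}^*$, the subgroup $\langle x_2,x_3\rangle$ is free abelian of rank two, hence $\cong\mathbb{Z}\times\mathbb{Z}$, so Theorem~\ref{theo2} gives $E(\mathrm{EPow}(Q_\infty))\subsetneq E(\mathrm{Com}(Q_\infty))$.

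For part (ii), the approach parallels the structural analyses of $\mathrm{Com}(D_{2^\infty})$ and $\mathrm{Pow}(Q_{2^\infty})$ carried out earlier in this section. By Lemma~\ref{q_infinity_prop}(i), every element of $Q_\infty$ has the form $x_a j^\alpha$ with $\alpha\in\{0,1\}$, so $Q_\infty$ is the disjoint union of the cosets $G$ and $Gj$. The relation $jx_a=x_{a^{-1}}j$ drives three short calculations that determine the whole structure. First, $x_c$ and $x_a j$ commute iff $c^2=1$, so the centre of $Q_\infty$ is $\{I,-I\}$ and these two elements are universal vertices in $\mathrm{Com}(Q_\infty)$. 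Second, $G$ is abelian, so after deleting $\{I,-I\}$ its remaining vertices form a single clique. Third, one computes $(x_a j)(x_b j)=-x_{ab^{-1}}$ while $(x_b j)(x_a j)=-x_{ba^{-1}}$, so $x_a j$ commutes with $x_b j$ iff $a^2=b^2$, i.e.\ $b=\pm a$; hence $Gj$ partitions into unordered pairs $\{x_a j, x_{-a} j\}$, each spanning a disjoint $K_2$ inside the induced subgraph on $Gj$, and no further edges link $G\setminus\{I,-I\}$ to $Gj$.

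Assembling these three facts yields
\[
\mathrm{Com}(Q_\infty)=(K_\infty\cup(\infty)K_2)\nabla K_2,
\]
which has the same shape as the description $\mathrm{Com}(Q_{4n})=(K_{2n-2}\cup nK_2)\nabla K_2$ from \cite{Paper_1}: two universal central vertices, one clique on the abelian coset, and a disjoint union of $K_2$'s on the non-abelian coset indexed by the $\{\pm a\}$-equivalence classes in $\mathbb{C}^*\setminus\{\pm 1\}$. The only real obstacle is cosmetic: unlike in the $Q_{2^\infty}$ case, the $K_\infty$ and $(\infty)K_2$ appearing here have uncountable vertex sets, so the phrase ``carried forth'' must be read as a statement about the shape of the graph (universal centre plus one large clique plus disjoint edges) rather than about cardinalities.
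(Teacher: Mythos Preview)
Your proof is correct and follows essentially the same route as the paper: for part (i) you exhibit $\mathbb{Z}$ and $\mathbb{Z}\times\mathbb{Z}$ subgroups inside the abelian diagonal $G$ and invoke Theorems~\ref{theo1} and~\ref{theo2}, and for part (ii) you identify $\{I,-I\}$ as the centre, recognise $G$ as a clique, and compute that the only elements commuting with $x_aj$ lie in $\langle x_aj\rangle$, arriving at the same decomposition $\mathrm{Com}(Q_\infty)=(K_\infty\cup(\infty)K_2)\nabla K_2$. Your version is in fact slightly more explicit than the paper's (specifying $x_2,x_3$ rather than generic infinite-order elements, and writing out the products $(x_aj)(x_bj)$), and your closing remark about uncountable cardinalities matches the paper's observation in the accompanying figure.
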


\begin{proof}
    The infinite quaternion group has both finite and infinite order elements and any two infinite order elements in it commute with each other. So, it has subgroups isomorphic to $\mathbb{Z}$ and $\mathbb{Z} \times \mathbb{Z}$. The first part follows by Theorems \ref{theo1} and \ref{theo2}. The identity and unique involution of $Q_{\infty}$ are universal vertices of Com($Q_{\infty}$). Since $G$ is abelian, the elements of $G$ form a complete subgraph on infinitely many vertices. The only other vertices are of the form $x_aj$ and it is easy to check that the only elements of $Q_{\infty}$ that commute with $x_aj$ are in $\langle x_aj \rangle$. Therefore, we have  Com($Q_{\infty}$) = $(K_{\infty}$ $\cup$ $\infty K_2)$ $\nabla$ $K_2$. We showed in Theorem 2 in \cite{Paper_1} that \newline Com($Q_{4n}$) = $(K_{2n-2}$ $\cup$ $n K_2)$ $\nabla$ $K_2$ which proves the rest. 
\end{proof}
\begin{comment}
Elements of $Q_{\infty}$ of the form $ \begin{pmatrix}
a & 0\\ 
0 & 1/a
\end{pmatrix}$ for $a \in \mathbb{G}_m$ commute with each other so they form a complete subgraph on infinite vertices. By Lemma \ref{q_infinity_prop}(\ref{q_infinity_prop_order}), the elements of the form $xj^{\alpha}$ are of order 4. The subgroup $ \langle xj^{\alpha} \rangle$ consists of the identity, unique involution of $Q_{\infty}$ and the inverse of generator which is $- xj^{\alpha}$. So, we have infinite copies of the complete graph on 4 vertices. Except these, there are no more edges in Com($Q_{\infty}$); the elements $x$ and $yj^{\alpha}$ commute if and only if $x$ is the unique involution. The elements $xj^{\beta}$ and $yj^{\alpha}$ commute if and only if they are inverses of each other.
\end{comment}

The graph below depicts commuting graphs of $Q_{\infty}$ and $Q_{4n}$. For $Q_{4n}$, we have the complete graph on $2n$ vertices on the left and $n$ copies of $K_4$ on the right, whereas, for $Q_{\infty}$, we have a complete graph on uncountably many vertices and uncountably many copies of $K_4$. %It can also be seen that Pow($Q_{2^{\infty}}$) is isomorphic to Com($Q_{\infty}$) even though the former is a torsion group and the latter is not. 

\begin{figure}[H]
    \centering
    \includegraphics[scale=0.35]{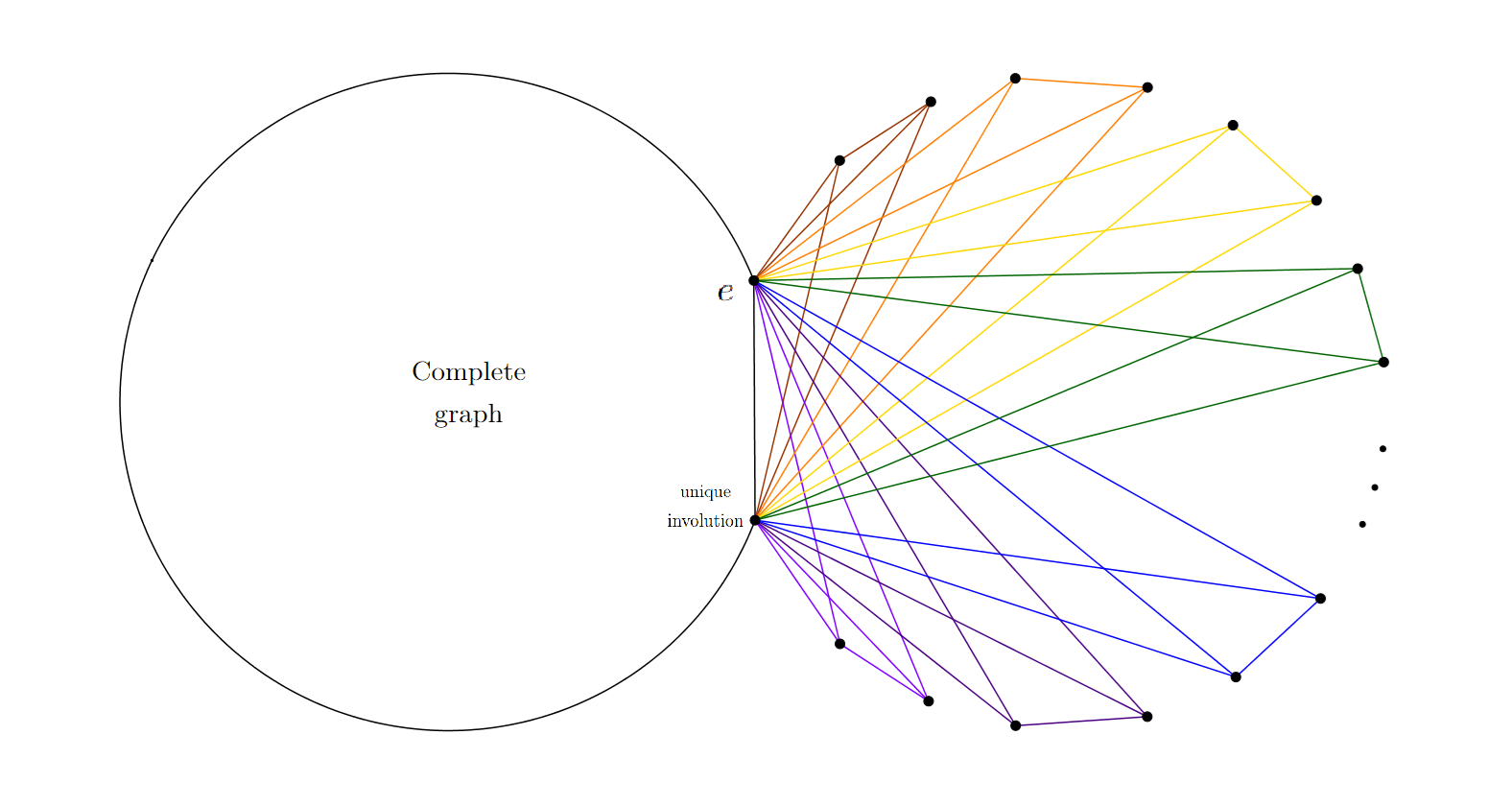}
    \caption{A depiction of the commuting graph of $Q_{\infty}$ and the power graph of $Q_{2^{\infty}}$}\label{com_g}
\end{figure}

In \cite{Paper_1}, we answered the following question posed by P. J. Cameron \cite{PJC_reviewpaper}: Do there exist groups $G$ and $H$ with $G \not\cong H$ such that the power graph of $G$ is isomorphic to the commuting graph of $H$? We proved that Pow($Q_{2^{n+1}}$) $\cong$ Com($D_{2.2^n}$) even though the two groups are not isomorphic to each other. For the generalised quaternion group, we have Pow($Q_{2^{n+1}}$) = Com($Q_{2^{n+1}}$). Observe that Com($Q_{2^{n+1}}$) $\cong$ Com($D_{2.2^n}$). In Theorem \ref{theo4} above, we showed that Pow($Q_{2^{\infty}}$) $\cong$ Com($D_{2^{\infty}}$). By Corollary \ref{Cor_1}, we have Pow($Q_{2^{\infty}}$) = Com($Q_{2^{\infty}}$). Observe that Com($Q_{2^{\infty}}$) is isomorphic to Com($D_{2^{\infty}}$).
Now, we give examples of groups $G$ and $H$ not isomorphic to each other such that Pow($G$) $\neq$ Com($G$) as well as Pow($H$) $\neq$ Com($H$) but Pow($G$) $\cong$ Com($H$).

The infinite dihedral group is defined as the group of symmetries of the frieze pattern $\ldots \bot\bot\bot \ldots$. This infinite group consists of the translations and reflections about the vertical axis. It has a presentation $$ D_{\infty} = \langle r,t \mid t^2 = 1 , trt = r^{-1} \rangle.$$ 
The translations have infinite order, and just like the finite dihedral groups, the reflections have order 2.
%This group is isomorphic to the semi-direct product of $\mathbb{Z}$ and $\mathbb{Z}_2$. 

\begin{cor}
    Let $D_{\infty}$ be the infinite dihedral group. Then {\rm Pow(}$D_{\infty}${\rm )} $\neq$ {\rm EPow(}$D_{\infty}${\rm )} = {\rm Com(}$D_{\infty}${\rm )}
\end{cor}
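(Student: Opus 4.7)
The plan is to reduce both assertions to Theorems \ref{theo1} and \ref{theo2} by analyzing the subgroup structure of $D_{\infty}$.

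First, for the non-equality $\mbox{Pow}(D_{\infty}) \neq \mbox{EPow}(D_{\infty})$, I would simply observe that $\langle r \rangle$ is a subgroup of $D_{\infty}$ isomorphic to $\mathbb{Z}$ (since $r$ has infinite order in the given presentation). By Theorem \ref{theo1}, the existence of such a subgroup is already sufficient to force the power graph to be a proper subgraph of the enhanced power graph.

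For the equality $\mbox{EPow}(D_{\infty}) = \mbox{Com}(D_{\infty})$, by Theorem \ref{theo2} it suffices to show $D_{\infty}$ has no subgroup isomorphic to $\mathbb{Z} \times \mathbb{Z}$, $\mathbb{Z} \times C_p$, or $C_p \times C_p$ for any prime $p$. Here I would use a normal form for elements of $D_{\infty}$: every element can be written uniquely as $r^n$ (which has infinite order for $n \neq 0$) or as $r^n t$ (which has order $2$, using $t^2 = 1$ and $trt = r^{-1}$). Hence the only finite orders occurring in $D_{\infty}$ are $1$ and $2$, which immediately restricts $p$ to $2$ in the second and third cases.

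The heart of the argument is then three short commutation checks. Two translations $r^m, r^n$ always commute and lie in the cyclic subgroup $\langle r \rangle$, so no pair of infinite-order elements can generate $\mathbb{Z} \times \mathbb{Z}$; this rules out the first case. Using $trt = r^{-1}$, one computes that $r^m$ commutes with a reflection $r^n t$ only when $r^{2m} = 1$, forcing $m = 0$, which rules out $\mathbb{Z} \times C_2$. Similarly, two reflections $r^m t$ and $r^n t$ commute only when $m = n$, so $D_{\infty}$ contains no Klein four subgroup, ruling out $C_2 \times C_2$. With these three obstructions excluded, Theorem \ref{theo2} gives the desired equality. There is no real obstacle here; the calculations are the standard ones for the infinite dihedral group, and the work is entirely in organizing the case split so that the theorems from Section 2 can be invoked cleanly.
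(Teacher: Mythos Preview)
Your proposal is correct and follows essentially the same approach as the paper: both invoke Theorem~\ref{theo1} via the copy of $\mathbb{Z}$ generated by $r$, and both verify the hypotheses of Theorem~\ref{theo2} by using the normal form $r^n$ or $r^n t$ and checking that no translation commutes with a reflection and no two distinct reflections commute. Your write-up is slightly more explicit in the commutation computations and in restricting $p$ to $2$, but the underlying argument is identical.
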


\begin{proof}
    The infinite dihedral group is isomorphic to the semi-direct product of $\mathbb{Z}$ and $\mathbb{Z}_2$. As an immediate consequence of Theorem \ref{theo1}, we have Pow($D_{\infty}$) $\neq$ EPow($D_{\infty}$). Since $D_{\infty}$ is the disjoint union of cosets $\langle r \rangle$ and $\langle r \rangle t$, the elements are either translations denoted by $r^{\alpha}$, where $\alpha \in \mathbb{Z}$, or reflections given by $r^{\beta}t$, where $\beta \in \mathbb{Z}$. No translation commutes with a reflection and so, there is no subgroup isomorphic to $\mathbb{Z} \times \mathbb{Z}_p$ in $D_{\infty}$ for a prime $p$. Similarly, since no two reflections commute, we have no subgroup isomorphic to $\mathbb{Z}_p \times \mathbb{Z}_p$ and neither does $D_{\infty}$ have a subgroup isomorphic to $\mathbb{Z} \times \mathbb{Z}$. By Theorem \ref{theo2}, we have EPow($D_{\infty}$) = Com($D_{\infty}$).    
\end{proof}

It can be observed that only the identity is a universal vertex in Com($D_{\infty}$). In Com($D_{\infty}$), we have a complete subgraph on infinite vertices and infinite copies of $K_2$, all intersecting in identity. In Corollary \ref{Cor_1}, we showed that Pow($D_{2^{\infty}}$) = EPow($D_{2^{\infty}}$) $\neq$ Com($D_{2^{\infty}}$). In the power graph of $D_{2^{\infty}}$, we have a complete subgraph on infinite vertices representing the rotations and infinite copies of $K_2$ on the identity and each reflection. All of these graphs intersect in identity element of $D_{2^{\infty}}$. This gives us the following result, which answers the question posed by P. J. Cameron in \cite{PJC_reviewpaper}, namely, Can Pow($G_1$) and Com($G_2$) be isomorphic for non-isomorphic groups $G_1$ and $G_2$ which both have their respective power graphs not equal to their respective commuting graphs? 

\begin{thm} 
    Let $D_{\infty}$ be the infinite dihedral group and $D_{2^{\infty}}$ be the locally dihedral group. Then {\rm Pow(}$D_{2^{\infty}}$$)$ is isomorphic to {\rm Com(}$D_{\infty})$.
\end{thm}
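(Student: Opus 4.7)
The plan is to construct a bijection $\phi : D_{2^{\infty}} \to D_{\infty}$ that induces a graph isomorphism between {\rm Pow}($D_{2^{\infty}}$) and {\rm Com}($D_{\infty}$). The preceding discussion indicates the common shape of both graphs: each is a countably infinite complete graph on the abelian ``rotation/translation'' subgroup, with countably many pendant edges attached at the identity, one per reflection. Once both structural descriptions are rigorously verified, the isomorphism reduces to matching two pairs of countably infinite sets.

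First I would verify the structure of {\rm Pow}($D_{2^{\infty}}$). Writing $D_{2^{\infty}} = C \langle s\rangle$ with $C \cong C_{2^{\infty}}$, local cyclicity of $C$ ensures that any two of its elements generate a finite cyclic $2$-subgroup whose subgroup lattice is a chain, so the induced subgraph on $C$ is $K_{\infty}$. Each reflection $cs$ with $c \in C$ satisfies $(cs)^2 = e$ by a direct calculation from $scs = c^{-1}$, so $\langle cs\rangle = \{e, cs\}$. From this I conclude that $cs$ is a pendant at $e$ in the power graph: it is not a power of any $c' \in C$ since $cs \notin C$, no element $c' \in C \setminus\{e\}$ lies in $\{e,cs\}$, and two distinct reflections $cs, c's$ are never related by the power relation since each generates only a two-element subgroup.

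Next I would verify the structure of {\rm Com}($D_{\infty}$). The translation subgroup $\langle r\rangle \cong \mathbb{Z}$ is abelian, so it induces $K_{\infty}$. Using $trt = r^{-1}$ one checks that $r^{\alpha}$ commutes with $r^{\beta}t$ only when $\alpha = 0$, and that $r^{\alpha}t$ commutes with $r^{\beta}t$ only when $\alpha = \beta$. Hence each reflection is adjacent in {\rm Com}($D_{\infty}$) only to $e$. Now I pick any bijection $\phi_1 : C \to \langle r\rangle$ with $\phi_1(e) = e$, and any bijection $\phi_2$ between the two countable reflection sets $\{cs : c \in C\}$ and $\{r^{\beta}t : \beta \in \mathbb{Z}\}$. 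Combining these into a single map $\phi$ on $D_{2^{\infty}}$ produces the desired graph isomorphism by the matching structural descriptions.

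The main obstacle, such as it is, lies in the first structural description: verifying that the reflections of $D_{2^{\infty}}$ really are pendants attached only at $e$ in {\rm Pow}($D_{2^{\infty}}$). Once that is clear, the remainder is a soft bijection argument between countably infinite sets.
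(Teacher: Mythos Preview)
Your proposal is correct and follows essentially the same approach as the paper: both arguments verify that each graph decomposes as a countable clique on the abelian index-$2$ subgroup together with countably many pendant reflections attached at the identity, and then match these pieces via bijections between the two cliques and the two reflection sets. Your explicit requirement that $\phi_1(e)=e$ is in fact a necessary detail for the pendants to line up, one that the paper's own construction leaves implicit.
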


\begin{proof}
    Let $D_{\infty}$ be the infinite dihedral group and $D_{2^{\infty}}$ be the locally dihedral group. To show that the power graph of $D_{2^{\infty}}$ is isomorphic to the commuting graph of $D_{\infty}$, we first show the existence of a bijective function between the two groups and later, we will show that the bijective map preserves edges. Consider the subgroup $\langle r \rangle$ of $D_{\infty}$ and the subgroup $C$ of $D_{2^{\infty}}$ (see presentations above). Since both of these subgroups are infinite countable, there exists a bijective function $f: \langle r \rangle \rightarrow C$. Using the facts established above that $D_{\infty}$ is the disjoint union of cosets $\langle r \rangle$ and $\langle r \rangle t$ and $D_{2^{\infty}}$ is the disjoint union of cosets $C$ and $Cs$, we can extend this bijective function from $D_{\infty}$ to $D_{2^{\infty}}$ by defining $f(r^it)=f(r^i)s$. It is easy to verify that $f$ is a bijective map from $D_{\infty}$ to $D_{2^{\infty}}$.\\
    Next, we show that this map is edge-preserving. Let $x,y$ be two distinct elements of $D_{\infty}$. If $x,y$ belong to $ \langle r \rangle$, then they commute with each other, and so they are adjacent in Com($D_{\infty}$). Their images $f(x)$ and $f(y)$ under the map $f$ belong to the subgroup $C$ of $D_{2^{\infty}}$, which implies that there exists some $m \in \mathbb{N}$ such that $f(x), f(y) \in \langle g_m \rangle$, where $g_m$ is one of the generators of the Pr\"ufer group $C$. Since $\langle g_m \rangle$ is a cyclic group of prime-power order and the power graph of a cyclic group of prime-power order is complete, therefore $f(x)$ is adjacent to $f(y)$ in Pow($D_{2^{\infty}}$). Now, let $x=r^it$ and $y=r^jt$ for some $i,j$. Since no reflection of $D_{\infty}$ commutes with any other non-identity element, we don't have an edge joining $x$ and $y$. Their images, $f(r^i)s$ and $f(r^j)s$, which are of order 2, are also not adjacent to each other in the power graph. Lastly, let $x=r^it$ and $y=r^j$ for some $i,j$. Then, they are not adjacent in the commuting graph of $D_{\infty}$ just as their images $f(r^i)s$ and $f(r^j)$ are not adjacent in the power graph of $D_{2^{\infty}}$. It is also clear that $D_{\infty}$ is not isomorphic to $D_{2^{\infty}}$ as the latter is a torsion group whereas $D_{\infty}$ is not. This completes the proof.  
\end{proof}

The graph below describes the commuting graph of $D_{\infty}$ and the power graph of $D_{2^{\infty}}$. For $D_{\infty}$, the complete graph on the left is on all the vertices corresponding to the translations and on the right, we have infinite copies of $K_2$ on all the reflections. For $D_{2^{\infty}}$, on the left we have a complete graph on all the rotations and infinite copies of $K_2$ on all the reflections on the right. 

\begin{figure}[H]
    \centering
    \includegraphics[scale=0.38]{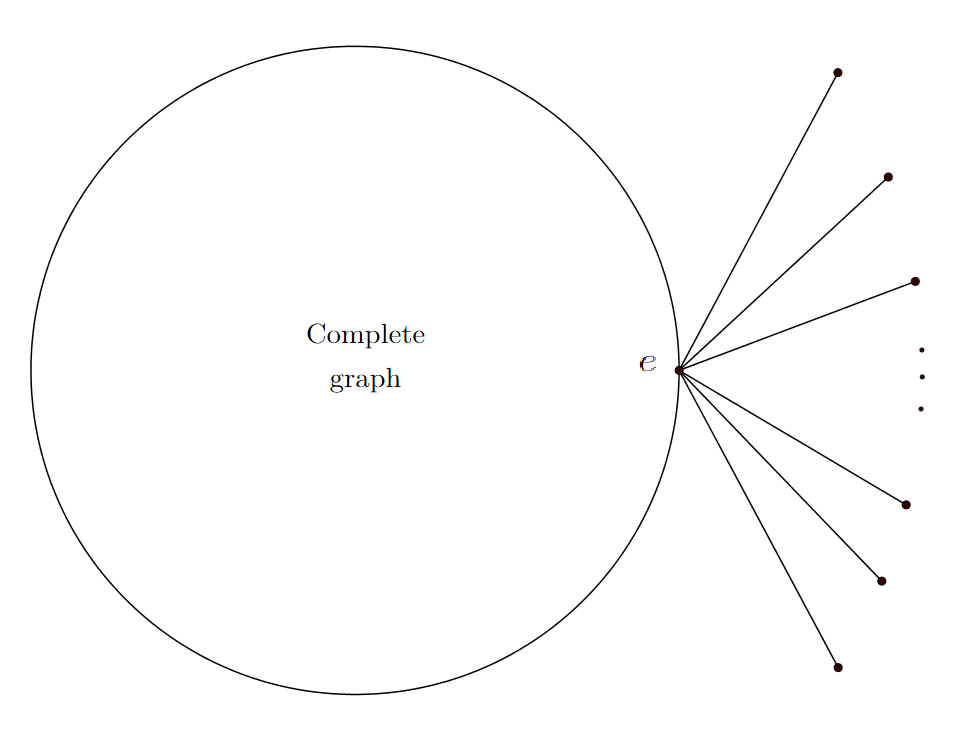}
    \caption{Commuting graph of $D_{\infty}$ and power graph of $D_{2^{\infty}}$}\label{Pow_g}
\end{figure}

%============================================================================================================================================================================

\end{document}